\DeclareMathOperator*{\argmax}{arg\,max}
\begin{document}

\title{A Parametric, Second-Order Cone Representable Model of Fairness for Decision-Making Problems}

\titlerunning{SOC Representable Model of Fairness}        

\author{Kaarthik Sundar \and Deepjyoti Deka \and Russell Bent}


\institute{K. Sundar (corresponding author) \at
        Information Systems \& Modeling, \\
              Los Alamos National Laboratory, \\ 
              Los Alamos, New Mexico, USA \\
              \email{\texttt{kaarthik@lanl.gov}} \\ 
        \\ 
       D. Deka\at
        MIT Energy Initiative, \\
              MIT,Cambridge, Massachusetts, USA \\
              \email{\texttt{deepj87@mit.edu}} \\ 
        \\ 
        R. Bent \at 
        Applied Mathematics \& Plasma Physics, \\ 
        Los Alamos National Laboratory, \\ 
        Los Alamos, New Mexico, USA \\
        \email{\texttt{rbent@lanl.gov}}  
}

\date{}

\maketitle  
\begin{abstract}
The article develops a parametric model of fairness called ``$\varepsilon$-fairness'' that can be represented using a single second-order cone constraint and incorporated into existing decision-making problem formulations without impacting the complexity of solution techniques. We develop the model from the fundamental result of finite-dimensional norm equivalence in linear algebra and show that this model has a closed-form relationship to an existing metric for measuring fairness widely used in the literature. Finally, a simple case study on the optimal operation of a damaged power transmission network illustrates its effectiveness.
\keywords{fairness \and second-order cone \and parametric model \and optimization}
\end{abstract}

\section{Introduction} \label{sec:intro}
Decision-making problems are optimization problems that arise in various engineering applications, like communication networks, vehicle routing, power grid operations, etc. Traditionally these optimization problems strive for efficiency by minimizing total cost or maximizing total benefit. Depending on the application, the cost can correspond to travel distance, damage, or control effort; the benefit can correspond to throughput, revenue, etc. One common feature of all these problems is that the benefit or cost is usually spread across multiple agents or entities. It is intuitively easy to see that the optimal solution to the decision-making problems that strive for efficiency may lead to an ``unfair'' distribution of benefits or costs across different agents, i.e., a significant portion of the benefits or costs is limited to a subset of agents. Nevertheless, it is also not difficult to realize that any attempt to obtain a fair distribution across agents will necessarily come at the expense of decreasing the overall efficiency. Thus, there inherently is a trade-off between efficiency and fairness that the decision-maker must make to ensure the equitable distribution of benefit or cost. This paper seeks to mathematically quantify this trade-off by developing a parametric model for incorporating fairness that can directly be embedded into existing optimization problem formulations that arise in various applications.

Modeling and incorporating fairness into optimization problems is not new in the optimization literature. Examining the effect these models have on the solutions of the underlying optimization problem has been studied in a variety of settings ranging from communication networks (see \cite{bertsekas2021data,shakkottai2008network}) and financial applications (see \cite{stubbs2009multi}) to general combinatorial optimization problems (see \textcite{bektacs2020using} and references therein). Before we present an overview of the existing literature, we set up the notations along the lines of recent work in \textcite{xinying2023guide}. To that end, we assume the underlying optimization problem with $n$ agents takes the form: 
\begin{gather}
  \max ~ \{ f(\bm x) : \bm x \in \mathcal X\} \label{eq:opt}
\end{gather}
where, $\bm x \in \mathbb R^m$ is the vector of decision variables, $f(\bm x)$ measures the efficiency of the solution $\bm x$. In addition, we consider a $n$-dimensional vector $\bm u$ of \emph{non-negative utilities} assigned to the $n$ agents, respectively, defined using a vector-valued linear map $\bm W: \mathcal X \rightarrow \mathbb R_{\geqslant 0}^n$ that changes with $\bm x$. Enforcing fairness now corresponds to ensuring the fairness of the distribution of utilities $\bm u$; depending on the application, the utilities can correspond to profit, negative cost, throughput, etc. With this setup, existing literature on incorporating fairness concerns with the design of a scalar-valued concave utility function, denoted by $U(\bm u)$, that aggregates the vector $\bm u$ of utilities into $U(\bm u)$ to represent the desirability/fairness of utility distribution.\footnote{In this article, we focus only on the literature that concerns parametric utility functions that quantify the inherent trade-off between fairness and efficiency. For a general survey of other non-parametric utility functions, interested readers are referred to \textcite{xinying2023guide}.} The following convex optimization problem is then solved to enforce a certain level of fairness using the utility function $U(\bm u)$ :
\begin{gather}
 \max ~ \{U(\bm u) : \bm u = \bm W(\bm x), \bm x \in \mathcal X\} \label{eq:utility}
\end{gather}
The main challenge then is to design a function $U(\bm u)$ so that when \eqref{eq:utility} is solved to optimality, the optimal utility distribution would be qualitatively the fairest for that particular $U(\bm u)$. Existing work, instead of designing a single function $U(\bm u)$ seeks to design a parametric family of utility functions that enable empirically studying the trade-off between efficiency and fairness.

The literature's first widely used parametric utility function is ``$\alpha$-fairness'' (\cite{mo2000fair}). Here, the utility function is parameterized by $\alpha$ and is given by 
\begin{gather}
 U_{\alpha}(\bm u) = 
 \begin{cases}
  \frac 1{1-\alpha} \sum_{1\leqslant i \leqslant n} u_i^{1-\alpha} & \text{ for $\alpha \geqslant 0$ and $\alpha \neq 1$}, \\ 
  \sum_{1\leqslant i \leqslant n} \log(u_i) & \text{ for $\alpha = 1$}.
 \end{cases} \label{eq:alpha-fair}
\end{gather}
$U_{\alpha}(\bm u)$ was introduced initially in the context of network congestion control by \textcite{mo2000fair}. This utility function is widely used in communication networks for resource allocation (\cite{shakkottai2008network}) and, more recently, in applications like air traffic management (\cite{yu2023alpha}). When $\alpha =0$, $U_{\alpha}(\bm u)$ reduces to the sum of the individual utilities, i.e., $\sum_{1\leqslant i \leqslant n} u_i$, that is referred to as the ``utilitarian'' criteria (\cite{xinying2023guide}). When $\alpha = \infty$, it reduces to the minimum utility value in the vector $\bm u$, i.e., $\min_{1\leqslant i \leqslant n} u_i$; this utility function is referred to as the ``max-min'' criteria for enforcing fairness and was initially developed in political philosophy by \textcite{Rawls1971}. The case when $\alpha = 1$ is known as ``proportional fairness'' was introduced in the context of communication networks in \textcite{kelly1997charging}. Nash studied this concept as a solution for players bargaining over the allocation of a shared resource (\cite{nash1950bargaining}); hence, it is also referred to as the ``Nash bargaining solution'' in economics. The general critique of the utility function $U_{\alpha}(\bm u)$ associated with $\alpha$-fairness is that despite $\alpha$ quantifying efficiency-fairness trade-offs (see \cite{xinying2023guide}), it is often not apparent apriori for a particular trade-off, what value of $\alpha$ is appropriate for the application.

The second widely used parametric utility function is called ``$p$-norm fairness'' and is given by the negative of the $\ell^p$ norm (a concave function), as
\begin{align}
 U_p(\bm u) = -\|\bm u\|_p \text{ for } p \geqslant 1. \label{eq:norm-fair}
\end{align} 
This utility function is widely used in the operations research literature for applications like vehicle routing, facility location, and nurse rostering (\cite{bektacs2020using,barbati2016equality,matl2018workload,martin2013cooperative}). Note that the ``min-max'' and ``utilitarian'' criteria for enforcing fairness correspond to $p$ values of $\infty$ and $1$, respectively. Existing literature (\cite{bektacs2020using}) interprets the value of $p$ to quantify the trade-off between efficiency and fairness, but there is no straightforward technique to identify the appropriate value of $p$ for a given application and its impact on efficiency. 

For both $\alpha$-fairness and $p$-norm fairness, existing work uses fairness indices to quantify how fair the solutions obtained by maximizing the corresponding utility functions are. To the best of our knowledge, no direct or indirect relationship exists between either of the utility functions in \eqref{eq:alpha-fair} or \eqref{eq:norm-fair} and any of the fairness indices used in the literature (\cite{xinying2023guide}). Hence, the general strategy used in the literature is to solve \eqref{eq:utility} with $U_{\alpha}(\bm u)$ ($U_p(\bm u)$) for a subset of $\alpha$ ($p$) values, compute the fairness indices a posteriori using the optimal $\bm u$ and check if they provide a satisfactory trade-off between fairness and efficiency for the application. Efficiency is measured by evaluating $f(\bm x)$ -- the objective function in Eq. \eqref{eq:opt}.
One of the most widely used (see \cite{shakkottai2008network, schwarz2011throughput, guo2013jain, guo2014throughput, masoumi2023fairness, kumari2023performance}) fairness indices to quantify fairness of utility vector $\bm u$ is ``Jain et al.'' (\cite{Jain1984}) index, introduced again in the context of communication networks, and given by 
\begin{gather}
 \text{Jain et al. Index: } \mathrm{JI}(\bm u) \triangleq \frac 1n \cdot \frac{\left( \sum_{1\leqslant i \leqslant n} u_i\right)^2}{\sum_{1\leqslant i \leqslant n} u_i^2}. \label{eq:ji}
\end{gather}
$\mathrm{JI}(\bm u)$ takes the value $1/n$ when exactly one element of $\bm u$ is non-zero and $1$ when all the elements of $\bm u$ are equal. Note that the denominator in \eqref{eq:ji} is the square of the 2-norm of $\bm u$. Throughout the rest of the presentation, we shall use the Jain et al. index to measure the fairness of $\bm u$'s distribution. Interested readers are referred to \textcite{xinying2023guide} for other commonly used indices like the Gini index (\cite{Gini1936}), Robin Hood index (\cite{kennedy1996income}) etc.

\subsection{Contributions} To overcome the non-trivial task of choosing parameters $\alpha$ and $p$ in existing fairness models to get the desired efficiency-fairness trade-off, this article develops a novel, parametric, and convex model for fairness called \emph{$\varepsilon$-fairness} starting from the equivalence of finite-dimensional norms. The parameter $\varepsilon \in [0, 1]$ indicates the fairness level with $\varepsilon = 0$ and $\varepsilon = 1$, corresponding to an unfair and fair distribution of $\bm u$. Crucially, we show that $\varepsilon$-fairness can be incorporated as a \emph{second-order cone (SOC) constraint} - \textbf{$g(\bm u, \varepsilon) \leqslant 0$}\footnote{for a fixed value of $\varepsilon$}, within the optimization problem \eqref{eq:opt}. The modified problem given in \eqref{eq:contribution}
\begin{gather}
 z(\varepsilon) = \max ~ \{f(\bm x) : \bm u = \bm W(\bm x), g(\bm u, \varepsilon) \leqslant 0, \bm x \in \mathcal X\} \label{eq:contribution}
\end{gather}
directly enforces a certain level of fairness on the distribution of $\bm u$. Note that since the SOC constraint is convex, it ensures no additional algorithmic complexity is required to solve the underlying optimization problem, i.e., if \eqref{eq:opt} is convex, \eqref{eq:contribution} will continue to be convex. We then show that the existing Jain et al. index can be expressed as a function of $\varepsilon$ in closed form, which will also result in a systematic approach to choosing $\varepsilon$ for the application in question. Furthermore, we shall present beneficial theoretical properties like monotonicity of $z(\varepsilon)$ for $\varepsilon$-fairness, which allow for easy quantification of the trade-off between efficiency and fairness and the development of approaches to select an appropriate value for $\varepsilon$.

The article is structured as follows: After developing the model of $\varepsilon$-fairness from in Sec. \ref{sec:model}, we present the theoretical results that relate $\varepsilon$-fairness with the Jain et al. index and the properties of $\varepsilon$-fairness in Sec. \ref{sec:properties} followed by a case study on a power system operation problem in Sec. \ref{sec:case-study} and concluding with Sec. \ref{sec:conclusion}.

\section{Development of $\varepsilon$-fairness} \label{sec:model}
We start by invoking a well-known inequality that relates $\|\bm u\|_1$ and $\|\bm u\|_2$ for any $\bm u \in \mathbb R_{\geqslant 0}^n$, derived using the fundamental result of ``equivalence of norms'' (see \cite{horn2012matrix})
\begin{gather}
 \|\bm u \|_2 \leqslant \| \bm u\|_1 \leqslant \sqrt{n} \cdot \|\bm u\|_2. \label{eq:norm-eq}
\end{gather} 
In \eqref{eq:norm-eq}, it is not difficult to see that
\begin{enumerate}[label=(\roman*)]
 \item the first inequality, holds at equality, i.e., $\|\bm u \|_2 = \| \bm u\|_1$, when $u_i$ is zero for all but one component (\textit{most unfair}), and 
 \item the second inequality in \eqref{eq:norm-eq} holds at equality, i.e., $\sqrt{n} \cdot \|\bm u\|_2 = \| \bm u\|_1$, when every component of $\bm u$ is equal (\textit{most fair}),
\end{enumerate}
We use the intuition provided by (i) and (ii) to introduce the following parametric definition of fairness: 
\begin{definition} \label{def:eps-fairness} \textit{$\varepsilon$-fairness} -- For any $\bm u \in \mathbb R_{\geqslant 0}^n$ and $\varepsilon \in [0, 1]$, we say $\bm u$ is ``$\varepsilon$-fair'', if $\left(1-\varepsilon + \varepsilon\sqrt{n} \right) \cdot \|\bm u \|_2 = \|\bm u\|_1$. Here, we say $\bm u$ is most unfair (fair) when $\varepsilon = 0$ ($\varepsilon = 1$), respectively. Indeed, at that value, $\bm u$ attains the lower (upper) bound in \eqref{eq:norm-eq}. Furthermore, we define $\bm u$ to be ``\textit{at least $\varepsilon$-fair}'', if 
\begin{gather}
 \left(1-\varepsilon + \varepsilon\sqrt{n} \right) \cdot \|\bm u \|_2 \leqslant \|\bm u\|_1. \label{eq:soc-e-fair}
\end{gather}
\end{definition}
We remark that $\left(1-\varepsilon + \varepsilon\sqrt{n} \right)$ in Definition \ref{def:eps-fairness} is the convex combination of $\sqrt n$ and $1$, with $\varepsilon$ as the multiplier. The definition also allows modification of the optimization problem in \eqref{eq:opt} to restrict the feasible set of utility vectors to be at least $\varepsilon$-fair, by adding \eqref{eq:soc-e-fair} as a constraint, as presented in \eqref{eq:contribution}, i.e., 
\begin{gather}
 g(\bm u, \varepsilon) \leqslant 0 \equiv \left(1-\varepsilon + \varepsilon\sqrt{n} \right) \cdot \|\bm u \|_2 \leqslant \|\bm u\|_1.
\end{gather}
When $\varepsilon$ is set to $0$ the \eqref{eq:soc-e-fair} enforces the trivial bound on the 1-norm of $\bm u$, i.e., $\| \bm u \|_2 \leqslant \| \bm u \|_1$. In the next section, we present some theoretical results that will provide intuition on how $\varepsilon$-fairness enforces the distribution of utilities to be fair.

\section{Theoretical properties} \label{sec:properties}
We start by introducing some notations. Let $\mu$ and $\sigma^2$ denote the sample mean and variance of all the utility values in the utility vector $\bm u$. Then, combining the definitions of sample mean, sample variance, and the assumption that all utility values are non-negative, we have
\begin{gather}
 \mu = \frac{\|\bm u\|_1}n \text{ and } \sigma^2 = \frac 1{n-1}\left( \|\bm u \|_2^2 - n \mu^2 \right). \label{eq:sample-defn}
\end{gather}
From \eqref{eq:soc-e-fair}, enforcing $\bm u$ to be at least $\varepsilon$-fair is equivalent to
\begin{gather}
 \| \bm u \|_2^2 \leqslant \frac{\| \bm u \|_1^2}{\left(1-\varepsilon + \varepsilon\sqrt{n} \right)^2}\nonumber\\
 \Rightarrow~\quad \sigma^2 \leqslant \frac{n}{n-1} \left( \frac n{\left(1-\varepsilon + \varepsilon\sqrt{n} \right)^2} - 1\right) \mu^2, \label{eq:variance-bound}
\end{gather}
where \eqref{eq:variance-bound} follows from definitions in \eqref{eq:sample-defn}. If we define 
\begin{gather}
 h(\varepsilon) \triangleq \frac{n}{n-1} \left( \frac n{\left(1-\varepsilon + \varepsilon\sqrt{n} \right)^2} - 1\right),
\end{gather}
then \eqref{eq:variance-bound} reduces to 
\begin{gather}
 c_v^2 \triangleq \left(\frac{\sigma}{\mu}\right)^2 \leqslant h(\varepsilon) \label{eq:cv}
\end{gather}
where $c_v$ is the ``coefficient of variation'' in statistics (\cite{everitt2010cambridge}), a standardized measure of frequency distribution's dispersion. It is not difficult to see that $h(\varepsilon)$ is strictly decreasing with $\varepsilon$, indicating that as $\varepsilon$ is varied from $0 \rightarrow 1$, the upper bound on $c_v$ decreases from a trivial value of $\sqrt n$ (always true) to $0$ (enforces the utility values for all agents to be equal). In other words, $\varepsilon$-fairness enforces fairness by controlling the dispersion in $\bm u$'s distribution through $\varepsilon$. This observation leads to a simple bisection on $\varepsilon \in [0, 1]$ providing the value of $\varepsilon$ that can result in the required fairness-efficiency trade-off. To the best of our knowledge, such algorithms do not exist and, in general, are difficult to design for the utility functions $U_{\alpha}(\bm u)$ and $U_p(\bm u)$ for varying $\alpha$ and $p$. 

Next, we present a key result that enables us to identify trends in the feasibility and optimality of \eqref{eq:contribution}.
\begin{proposition} \label{prop:eps-max}
Given $\bar\varepsilon \in [0, 1]$, if the problem \eqref{eq:contribution} is infeasible, then it is also infeasible for any $\varepsilon \in [\bar \varepsilon, 1]$.
\end{proposition}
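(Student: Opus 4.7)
The plan is to show that the feasible set of \eqref{eq:contribution} is nested decreasing in $\varepsilon$, so that infeasibility at $\bar\varepsilon$ propagates to every $\varepsilon \in [\bar\varepsilon, 1]$ by contrapositive.

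First I would rewrite the coefficient multiplying $\|\bm u\|_2$ in \eqref{eq:soc-e-fair} as $c(\varepsilon) \triangleq 1 - \varepsilon + \varepsilon\sqrt{n} = 1 + \varepsilon(\sqrt{n} - 1)$, and observe that $c$ is affine in $\varepsilon$ with slope $\sqrt{n} - 1 \geqslant 0$, strictly positive for $n \geqslant 2$. Thus raising $\varepsilon$ never relaxes, and typically tightens, the $\varepsilon$-fairness inequality $c(\varepsilon) \|\bm u\|_2 \leqslant \|\bm u\|_1$.

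Next I would unpack what this means for the feasible set of \eqref{eq:contribution}. For any candidate $\bm x \in \mathcal X$ with induced utility $\bm u = \bm W(\bm x)$: if $\bm u = \bm 0$ then both sides of the fairness constraint vanish and it holds for every $\varepsilon$; if $\bm u \neq \bm 0$, the constraint rearranges to $c(\varepsilon) \leqslant \|\bm u\|_1 / \|\bm u\|_2$, and monotonicity of $c$ ensures the set of admissible $\varepsilon$ is an interval of the form $[0, \varepsilon^*(\bm x)]$ (possibly empty). In either case, if $\bm x$ is feasible at some $\varepsilon_2$ then it remains feasible at every $\varepsilon_1 \in [0, \varepsilon_2]$, since the remaining constraints $\bm u = \bm W(\bm x)$ and $\bm x \in \mathcal X$ do not involve $\varepsilon$. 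This yields the nesting $F(\varepsilon_2) \subseteq F(\varepsilon_1)$ whenever $\varepsilon_1 \leqslant \varepsilon_2$, where $F(\varepsilon)$ denotes the feasible set of \eqref{eq:contribution}.

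The proposition then follows by contrapositive: if $F(\bar\varepsilon) = \emptyset$, then $F(\varepsilon) \subseteq F(\bar\varepsilon) = \emptyset$ for every $\varepsilon \in [\bar\varepsilon, 1]$. I do not anticipate any real obstacle; the only care required is to handle the degenerate cases $\bm u = \bm 0$ and $n = 1$, where $c$ is not strictly increasing and the fairness constraint carries no information, so that the monotonicity argument bites only on the nontrivial cases.
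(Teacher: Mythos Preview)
Your proposal is correct and follows essentially the same approach as the paper: both arguments rest on the monotonicity of the coefficient $1-\varepsilon+\varepsilon\sqrt{n}$ in $\varepsilon$, which yields the nesting of feasible sets $\mathcal Y^{\varepsilon_1} \subseteq \mathcal Y^{\varepsilon_2}$ whenever $\varepsilon_1 \geqslant \varepsilon_2$. Your treatment is slightly more detailed in handling the degenerate cases $\bm u = \bm 0$ and $n=1$, but the core reasoning is identical.
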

\begin{proof}
 To prove this statement, we let 
 \begin{gather*}
 \mathcal Y^{\varepsilon} \triangleq \left\{ (\bm u, \bm x)~ \left\vert ~
 \begin{gathered}
  \bm u = \bm W(\bm x), ~\bm x \in \mathcal X, \text{ and } \\ 
  \left(1-\varepsilon + \varepsilon\sqrt{n} \right) \cdot \| \bm u \|_2 \leqslant \| \bm u \|_1
 \end{gathered} \right. \right\}. 
 \end{gather*}
 $\mathcal Y^{\varepsilon}$ is the set of feasible solutions for \eqref{eq:contribution}. The proof now follows from the observation that for any $\varepsilon_1, \varepsilon_2 \in [0, 1]$ such that $\varepsilon_1 \geqslant \varepsilon_2$, $\left(1-\varepsilon_1 + \varepsilon_1\sqrt{n} \right) \geqslant \left(1-\varepsilon_2 + \varepsilon_2\sqrt{n} \right)$ which gives $\mathcal Y^{\varepsilon_1} \subseteq \mathcal Y^{\varepsilon_2}$.  \qed
\end{proof} 
The above proposition states that the size of the feasible set of solutions to \eqref{eq:contribution} decreases as we increase $\varepsilon$. In other words, if the distribution of utilities in $\bm u$ cannot be made $\bar\varepsilon$-fair, it cannot be made any ``fair-er'', aligning with our intuitive understanding of fairness. A practical consequence of the proposition is that there exists a $0\leqslant \varepsilon^{\max}\leqslant 1$ such that for any $\varepsilon \in (\varepsilon^{\max}, 1]$, problem \eqref{eq:contribution} is infeasible, and $\varepsilon \in (0,\varepsilon^{\max}]$, problem \eqref{eq:contribution} is feasible.
From here on, we shall refer to $[0, \varepsilon^{\max}]$ as the ``feasibility domain'' for \eqref{eq:contribution}. 
We now state another useful result in quantifying the trade-off between fairness and efficiency in optimization problems. 
\begin{proposition} \label{prop:monotonicity}
The univariate function $z(\varepsilon)$, defined in \eqref{eq:contribution}, monotonically decreases with $\varepsilon$ for all values in the feasibility domain of \eqref{eq:contribution}. 
\end{proposition}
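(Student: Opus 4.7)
The plan is to reduce the monotonicity claim to a straightforward consequence of Proposition~\ref{prop:eps-max}, since that result already characterizes how the feasible set $\mathcal{Y}^{\varepsilon}$ of \eqref{eq:contribution} varies with $\varepsilon$. Concretely, $z(\varepsilon)$ is defined as the maximum of a fixed objective $f(\bm x)$ over the set $\mathcal{Y}^{\varepsilon}$, so monotonicity of $z$ will follow from set inclusion of the feasible regions.

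First, I would fix two parameter values $\varepsilon_1, \varepsilon_2$ in the feasibility domain $[0, \varepsilon^{\max}]$ with $\varepsilon_1 \geqslant \varepsilon_2$. From the argument in the proof of Proposition~\ref{prop:eps-max}, the scalar $(1 - \varepsilon + \varepsilon\sqrt{n})$ is non-decreasing in $\varepsilon$ (because $\sqrt{n} \geqslant 1$ for $n \geqslant 1$), which tightens the second-order cone constraint \eqref{eq:soc-e-fair} as $\varepsilon$ grows. Hence $\mathcal{Y}^{\varepsilon_1} \subseteq \mathcal{Y}^{\varepsilon_2}$.

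Next, I would invoke the elementary fact that the supremum of a function over a subset is no greater than the supremum over a superset: maximizing $f(\bm x)$ over the smaller set $\mathcal{Y}^{\varepsilon_1}$ yields a value no larger than maximizing it over $\mathcal{Y}^{\varepsilon_2}$. Since both $\varepsilon_1, \varepsilon_2$ lie in the feasibility domain, the maxima are attained (or at least finite), and we obtain $z(\varepsilon_1) \leqslant z(\varepsilon_2)$, establishing monotonic non-increase.

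The only subtlety, and the place I would be most careful, is the distinction between strict and weak monotonicity: the proposition only claims monotonic decrease, so a non-strict inequality suffices and no additional structural assumptions on $f$, $\bm W$, or $\mathcal{X}$ (such as strict concavity or a binding fairness constraint) are needed. I would close by noting that this argument holds uniformly across the feasibility domain, so the claim follows globally on $[0, \varepsilon^{\max}]$.
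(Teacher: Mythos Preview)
Your proposal is correct and mirrors the paper's own proof: both reduce the claim to the nesting $\mathcal{Y}^{\varepsilon_1} \subseteq \mathcal{Y}^{\varepsilon_2}$ for $\varepsilon_1 \geqslant \varepsilon_2$ (via Proposition~\ref{prop:eps-max}) together with the fact that \eqref{eq:contribution} is a maximization, yielding $z(\varepsilon_1) \leqslant z(\varepsilon_2)$. Your added remark on weak versus strict monotonicity is a reasonable clarification but not needed for the argument as stated.
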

\begin{proof}
 The proof follows from the following two observations:
 \begin{enumerate}
  \item the feasible set of solutions of \eqref{eq:contribution} becomes smaller as $\varepsilon$ increases within its feasibility domain (see proposition \ref{prop:eps-max}) and
  \item the problem \eqref{eq:contribution} is a maximization problem.
 \end{enumerate}
 Together, the above observations lead to the conclusion that if $\varepsilon_1 \geqslant \varepsilon_2$, then $z(\varepsilon_1) \leqslant z(\varepsilon_2)$. \qed
\end{proof}
Proposition \ref{prop:monotonicity} tells us that when $\varepsilon$ increases, the distribution of $\bm u$ becomes fairer but the maximum achievable efficiency, given by $z(\varepsilon)$, decreases. This conforms with our intuitive understanding of the trade-off between fairness and efficiency and enables quantifying this trade-off for different values of $\varepsilon$. As emphasized in the introduction, we reiterate that monotonicity property analogous to Proposition \ref{prop:monotonicity} and systematic techniques to quantify the trade-off between efficiency and fairness are lacking in the state-of-the-art parametric utility-based models of fairness.

\subsection{Relationship to Jain et al. index} \label{subsec:JI}
We now show that enforcing $\varepsilon$-fairness of the utilities is equivalent to setting the Jain et al. index of the utilities to a bijective function of $\varepsilon$. This will translate any value of the Jain et al. index sought to an equivalent $\varepsilon$ value to enforce $\varepsilon$-fairness. To the best of our knowledge, this cannot be done using any of the existing models of fairness.
\begin{proposition} \label{prop:ji}
Enforcing $\bm u$ to be $\varepsilon$-fair is equivalent to setting $$\mathrm{JI}(\bm u) = \frac{(1-\varepsilon+\varepsilon\sqrt n)^2}n$$
\end{proposition}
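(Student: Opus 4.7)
The plan is to reduce everything to an algebraic identity between $\|\bm u\|_1$ and $\|\bm u\|_2$. Since the utilities are non-negative, the definition of the Jain et al. index in \eqref{eq:ji} can be rewritten as
\begin{gather*}
 \mathrm{JI}(\bm u) = \frac{1}{n} \cdot \frac{\left(\sum_{1\leqslant i \leqslant n} u_i\right)^2}{\sum_{1\leqslant i \leqslant n} u_i^2} = \frac{1}{n} \cdot \frac{\|\bm u\|_1^2}{\|\bm u\|_2^2},
\end{gather*}
so the Jain index is nothing more than a rescaled ratio of the squared norms that already appear in Definition \ref{def:eps-fairness}.

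Next, I would invoke the defining equation of $\varepsilon$-fairness, namely $(1-\varepsilon+\varepsilon\sqrt n)\cdot\|\bm u\|_2 = \|\bm u\|_1$. Squaring both sides (legal since both sides are non-negative) and dividing by $\|\bm u\|_2^2$ yields $\|\bm u\|_1^2/\|\bm u\|_2^2 = (1-\varepsilon+\varepsilon\sqrt n)^2$. Substituting this directly into the rewritten expression for $\mathrm{JI}(\bm u)$ gives the claimed closed form. The converse direction is immediate: if $\mathrm{JI}(\bm u) = (1-\varepsilon+\varepsilon\sqrt n)^2/n$, then $\|\bm u\|_1^2 = (1-\varepsilon+\varepsilon\sqrt n)^2\|\bm u\|_2^2$, and taking non-negative square roots recovers the $\varepsilon$-fairness identity.

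The only subtlety worth flagging is the degenerate case $\bm u = \bm 0$, where the Jain index is undefined (a $0/0$ ratio) but $\varepsilon$-fairness is vacuously satisfied for every $\varepsilon$; one simply excludes this case, as is standard when discussing $\mathrm{JI}(\bm u)$. Beyond that, the proof is a one-line algebraic substitution, so there is no real obstacle; the content of the proposition is conceptual rather than technical, namely that the map $\varepsilon \mapsto (1-\varepsilon+\varepsilon\sqrt n)^2/n$ is strictly increasing and bijects $[0,1]$ onto $[1/n, 1]$, which is precisely the range of $\mathrm{JI}(\bm u)$ highlighted after \eqref{eq:ji}. This bijectivity, although not part of the statement, is what makes the equivalence practically useful for translating any target Jain index into an equivalent $\varepsilon$-fairness constraint.
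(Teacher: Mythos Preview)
Your proposal is correct and follows essentially the same route as the paper: rewrite $\mathrm{JI}(\bm u)$ as $\|\bm u\|_1^2/(n\|\bm u\|_2^2)$ using non-negativity of the utilities, then substitute the $\varepsilon$-fairness identity $(1-\varepsilon+\varepsilon\sqrt n)\|\bm u\|_2=\|\bm u\|_1$. Your added remarks on the converse direction, the degenerate $\bm u=\bm 0$ case, and the bijectivity of $\varepsilon\mapsto(1-\varepsilon+\varepsilon\sqrt n)^2/n$ are sound elaborations but not required for the argument.
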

\begin{proof}
 Utilizing the non-negativity of the utility values, Jain et al. index in \eqref{eq:ji} can be equivalently rewritten as $$\mathrm{JI}(\bm u) = \frac{\|\bm u\|_1^2}{n\cdot \|\bm u\|_2^2}.$$ Combining the above equation with the definition of enforcing $\varepsilon$-fairness of $\bm u$ leads to
 \begin{gather}
  \mathrm{JI}(\bm u) = \frac{\left(1-\varepsilon + \varepsilon\sqrt{n} \right)^2}{n} \triangleq w(\varepsilon). \label{eq:w-defn}
 \end{gather}
It is also easy to observe that enforcing $\bm u$ to be at least $\varepsilon$-fair is equivalent to $\mathrm{JI}(\bm u) \geqslant w(\varepsilon)$.  \qed
\end{proof}
Proposition \ref{prop:ji} provides a bijective function, $w(\varepsilon)$, that can translate any value of $\varepsilon$ to an equivalent value of the Jain et al. index as also observed in Fig. \ref{fig:ji-eps}. The proposition shows that the $\varepsilon$-fairness model is a scaled version of the Jain et al. index, i.e., the following statement holds for any $\bm u \in \mathbb R^n{\geqslant 0}$:
\begin{gather*}
    \left(1-\varepsilon + \varepsilon\sqrt{n} \right) \cdot \|\bm u \|_2 \leqslant \|\bm u\|_1  ~\xLongleftrightarrow{\text{Definition \ref{def:eps-fairness}}}~ \bm u \text{ is at least $\varepsilon$-fair } \\
    \xLongleftrightarrow{\text{Proposition \ref{prop:ji}}}~ \mathrm{JI}(\bm u) \geqslant w(\varepsilon) 
    ~\xLongleftrightarrow{\text{Eq. \eqref{eq:ji} and take sq. root}}~ \sqrt{n \cdot w(\varepsilon)} \cdot \|\bm u\|_2 \leqslant \| \bm u \|_1
\end{gather*}
Hence, a practitioner can use either $\varepsilon$-fairness or $\mathrm{JI}(\bm u) \geqslant w(\varepsilon)$ to enforce fairness and the above statement guarantees that both approaches of enforcing fairness will yield the same results. \textcite{Jain1984} hypothesize the formula for the Jain et al. index to be given by Eq. \eqref{eq:ji} and show that it has desirable properties of population size independence, scale and metric independence, boundedness, and continuity with respect to small changes in $\bm u$ without providing any insight as to how one can derive the formula and mathematical quantification as to why it measures fairness. The development of $\varepsilon$-fairness in Sec. \ref{sec:model} starting from a fundamental result in linear algebra provides this much-needed foundation to the Jain et al. index in Eq. \eqref{eq:ji}. Furthermore, we remark that all the results proved in Sec. \ref{sec:properties} can be extended to use the Jain et al. index by replacing $\left(1-\varepsilon + \varepsilon\sqrt{n} \right)$ with $\sqrt{n \cdot w(\varepsilon)}$. 

\begin{figure}
 \centering
 \includegraphics[scale=0.8]{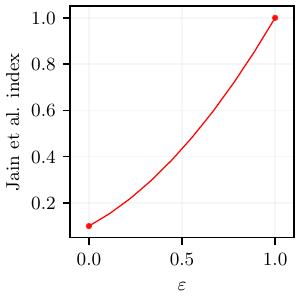}
 \caption{Plot of $\mathrm{JI}(\bm u) = w(\varepsilon)$ when $n = 10$.}
 \label{fig:ji-eps}
\end{figure}

\section{Case study} \label{sec:case-study}
The case studies in this section are specifically designed to answer the following questions: (i) How can $\varepsilon$-fairness be integrated into existing optimization problems, (ii) How can we both qualitatively and quantitatively illustrate that fairness is being enforced, and (iii) How can we quantify the trade-off between efficiency and fairness? To that end, we choose the application of electric transmission network operations. This is motivated by the rising interest in incorporating fairness into power system optimization to ensure social, energy justice, and health impacts of grid operations are given their due importance (\cite{Mathieu2023,taylor2023managing,huang2023inequalities,hashmi2022can,wang2023local}). In particular, we choose the Minimum Load Shedding (MLS) problem formulated in \textcite{coffrin2018relaxations} with a Direct Current (DC) power flow model that focuses on minimizing the sum of the load shed to each customer after an extreme event damages a subset of components of the network. We incorporate $\varepsilon$-fairness into the MLS problem and answer the questions posed at the beginning of this section using the theoretical results presented thus far. We start with the formulation of the MLS problem.

\subsection{Problem formulation} \label{subsec:formulation}
Consider a damaged power network with undamaged transmission lines $\mathcal L$, buses $\mathcal B$, generators $\mathcal G$, and loads (demands) $\mathcal D$. We also let $\mathcal G_b$ and $\mathcal D_b$ denote the subset of generators and loads co-located on bus $b \in \mathcal B$. In an electric power network, power is produced at the generators $\mathcal G$ and consumed at the loads or demand locations $\mathcal D$. The power injected into the system is transmitted using transmission lines; these lines, in turn, have capacity limits (also referred to as thermal limits) indicative of the maximum amount of power that can flow through the line. Furthermore, each bus in $\mathcal B$ is also associated with a voltage, a phasor quantity, i.e., it has a magnitude and a phase angle. For a DC approximation of the physics of power flowing a network, the voltage magnitude is assumed to be unity at all buses; the power flowing the line that connects bus $i$ and $j$ is proportional to the phase angle difference at the two buses (with the proportionality constant given by the susceptance of the line). Armed with this brief overview of electric power networks, we proceed by introducing additional notations as follows: Let the decision variables $\theta_b$ denote the phase angle at bus $b \in \mathcal B$, $p_g$ denote the power generated by generator $g \in \mathcal G$, $p_{ij}$ denote the power flowing on the transmission line $(i, j) \in \mathcal L$ and non-negative variable $d_i$ indicate the amount of load shed at $i \in \mathcal D$. Let parameters $p_{ij}^{\max}, b_{ij}$ be the respective thermal limit and susceptance for line $(i, j) \in \mathcal L$, and $p_g^{\min}, p_g^{\max}$ be the generation lower and upper limits $ ~\forall g \in \mathcal G$. We use $d_i^{\max} ~\forall i \in \mathcal D$ to represent the total power demands. The MLS problem is formulated as:
\begin{gather*}
 \min \sum_{i \in \mathcal D} d_i \text{ subject to: } \\ 
 \mathcal X_{\mathrm{DC}} \triangleq \left\{ 
 \begin{gathered}
 p_{ij} = b_{ij} (\theta_i - \theta_j) ~~\quad \forall (i, j) \in \mathcal L \\ 
 \sum_{g \in \mathcal G_b} p_g - \sum_{i \in \mathcal D_{b}} (d_i^{\max} - d_i) = \sum_{j \in \mathcal B} p_{bj} - \sum_{j \in \mathcal B} p_{jb} ~~ \forall b \in \mathcal B \\ 
 p_g \in [p_g^{\min}, p_g^{\max}] ~~\quad \forall g \in \mathcal G \\
 d_i \in [0, d_i^{\max}]  ~~\quad \forall i \in \mathcal D\\ 
 -p_{ij}^{\max} \leqslant p_{ij} \leqslant p_{ij}^{\max} ~~\quad \forall (i, j) \in \mathcal L
 \end{gathered}
 \right\} 
\end{gather*}
In the above definition of $\mathcal X_{\mathrm{DC}}$, the first two equations are the power flow for each line $(i, j) \in \mathcal L$ and the nodal balance at each bus $b \in \mathcal B$. The remaining inequality constraints are the capacity limits. If we let $\bm d \in \mathbb{R}_{\geqslant 0}$ denote the vector of the individual load sheds $d_i$, $i \in \mathcal D$, and $\bm e$ denote the vector of $1$s, we can rewrite the above formulation on the lines of \eqref{eq:opt} as 
\begin{gather}
 \min ~\{ \bm e^{\intercal} \bm d : \bm d \in \mathcal X_{\mathrm{DC}} \}. \label{eq:opt-mls}
\end{gather}
In \eqref{eq:opt-mls}, the total load shed, $\bm e^{\intercal} \bm d$, is inversely proportional to the efficiency of the system, i.e., the lower the load shed, the higher the efficiency of the system. In this setting, an operator seeks to be fair to all the grid customers by ensuring the $\bm d$ distribution is fair. We enforce this condition using the developed model for $\varepsilon$-fairness, analogous to \eqref{eq:contribution}, as 
\begin{gather}
 z(\varepsilon) = \min\left\{ \bm e^{\intercal} \bm d \left\vert 
 \bm d \in \mathcal X_{\mathrm{dc}} \text{ and } 
 \left(1-\varepsilon + \varepsilon\sqrt{|\mathcal D|} \right) \| \bm d \|_2 \leqslant \| \bm d \|_1 
 \right. \right\}. \label{eq:fair-mls}
\end{gather}

Furthermore, to compare the solutions obtained from enforcing $\varepsilon$-fairness to that obtained using the utility function-based approaches, we formulate and solve the following problem that seeks to maximize the utility function $U_p(\bm u)$ for different values of $p \geqslant 1$:
\begin{gather}
    \bm d_p^* = \argmax \left\{ U_p(\bm d) \left\vert \bm d \in \mathcal X_{\mathrm{dc}} \right. \right\} \text{ and } z_p^* \triangleq \bm e^{\intercal} \bm d_p^* \label{eq:p-norm-mls}
\end{gather}
Though the same formulation extends for the utility function for $\alpha$-fairness, we do not present results with $U_{\alpha}(\bm d)$ as they were similar to those that were obtained using $U_p(\bm d)$. Finally, in Eq. \eqref{eq:p-norm-mls}, $z_p^*$ denotes the sum of the individual components of $\bm d_p^*$. 

\subsection{Network data and computational platform} \label{subsec:data}
All the case studies are performed on the Power Grid Library's (\cite{pglib}) IEEE 14-bus test system\footnote{The choice of a small test network is motivated by the ease of visualizing results. We reiterate that enforcing $\varepsilon$-fairness to existing optimization problems involves the addition of a single convex SOC constraint. If the original optimization problem is convex, adding the SOC constraint does not change the computational complexity needed to find the optimal solution. This was observed empirically when repeating all the experiments presented in this section on a more extensive IEEE RTS-96 test network.} with 11 customers (loads).
For the 14-bus network, random damage scenarios were generated, each containing five damaged lines that caused non-zero total load shed for the MLS problem \eqref{eq:opt-mls}. This resulted in 9765 scenarios on which the MLS problem and its fair variant were tested. 
The formulations presented in this article were implemented in the Julia programming language (\cite{bezanson2017julia}) using the JuMP (\cite{dunning2017jump}) package. All linear and SOC optimization problems were solved using CPLEX (\cite{cplex}), a commercial linear and SOC programming solver. All experiments were run on an Intel Haswell 2.6 GHz, 62 GB, 20-core machine at Los Alamos National Laboratory.

\subsection{Distribution of load shed for varying $\varepsilon$}
Here, we illustrate graphically what enforcing $\varepsilon$-fairness means for the MLS problem in terms of the distribution of $\bm d$. To that end, we let $\varepsilon$ vary in the set $\{0.0, 0.3, 0.6, 0.9\}$, solve \eqref{eq:fair-mls} with that value of $\varepsilon$ for each of the 9765 damage scenarios. The box plot of the load sheds for all scenarios in Fig. \ref{fig:ls-q2} shows that fairness gets imposed when $\varepsilon$ is increased. This type of qualitative analysis can also be done with existing parametric utility function-based methods ($U_\alpha(\bm u)$ and $U_p(\bm u)$) with one important caveat: fairness is not monotonic with the parameters ($\alpha$ and $p$) in those functions; results demonstrating this failure in monotonicity is presented in the subsequent sections.

\begin{figure*}[htbp]
 \centering
 \includegraphics[scale=0.8]{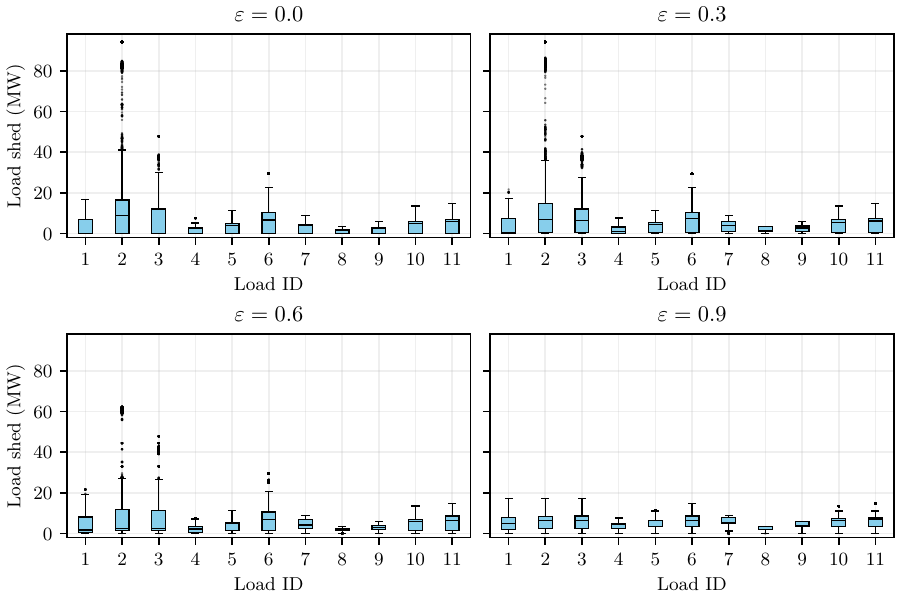}
 \caption{Box plot of load shed obtained by solving the fair version of the MLS problem \eqref{eq:fair-mls} for different values of $\varepsilon$. Notice that when $\varepsilon = 0.0$, \eqref{eq:fair-mls} and \eqref{eq:opt-mls} are equivalent.}
 \label{fig:ls-q2}
\end{figure*}

\subsection{When enforcing $\varepsilon$-fairness is infeasible}
Table \ref{tab:infeasible-q2} shows the number of damage scenarios for which \eqref{eq:fair-mls} is infeasible for different values of $\varepsilon$. The number of infeasible scenarios increases with $\varepsilon$, empirically validating the correctness of Proposition \ref{prop:eps-max}. Note that if for a given $\varepsilon \in [0, 1]$, no $\varepsilon$-fair solution exists, the value of $\varepsilon$ has to be reduced to ensure $\varepsilon < \varepsilon^{\max}$; here the value of $\varepsilon^{\max}$ can be computed systematically using bisection on $[0, 1]$. At this juncture, we remark that it is impossible to answer the question: ``Is a certain level of fairness, specified through any combination or fairness metrics, attainable for a given application?'' through any of the existing models for fairness. To the best of our knowledge, $\varepsilon$-fairness, developed in this article, is the first model that allows us to answer these types of practically relevant questions. 

\begin{table}[htbp]
 \centering

 \begin{tabular}{cc}
 \toprule
 $\varepsilon$ & \# infeasible scenarios \\
 \midrule 
 $0.0 \leqslant \varepsilon \leqslant 0.5$ & 0/9765 \\ 
 $0.6 \leqslant \varepsilon \leqslant 0.7$ & 813/9765 \\ 
 $\varepsilon = 0.8$ & 1443/9765 \\ 
 $\varepsilon = 0.9$ & 1715/9765 \\ 
 $\varepsilon = 1.0$ & 9131/9765 \\
 \bottomrule
 \end{tabular}
 \caption{Number of scenarios for which \eqref{eq:fair-mls} is infeasible for different values of $\varepsilon$.}
 \label{tab:infeasible-q2}
\end{table}

\subsection{Jain et al. index for varying $\varepsilon$}
We now present a plot showing that solutions obtained by enforcing $\varepsilon$-fairness are fair by computing the Jain et al. index of the load shed values across customers for each damage scenario. Fig. \ref{fig:ji-box} shows the box plot of $w(\varepsilon)$ values with $n$ set to 11. One can also observe that the Jain et al. index monotonically increases with increasing $\varepsilon$, indicating that the distribution of load sheds is getting fairer with increasing $\varepsilon$; this serves as validation for the theoretical result in Sec.~\ref{subsec:JI}. We note that the box plot is shown only for the feasible scenarios for all $\varepsilon \in [0, 0.9]$. We excluded $\varepsilon = 1$ as, in this case, only 634 out of the 9756 damage scenarios were feasible (see Table \ref{tab:infeasible-q2}), i.e., for these 634 instances, \eqref{eq:fair-mls} is feasible for all values of $\varepsilon \in [0, 1]$.

\begin{figure}
 \centering
 \includegraphics[scale=0.8]{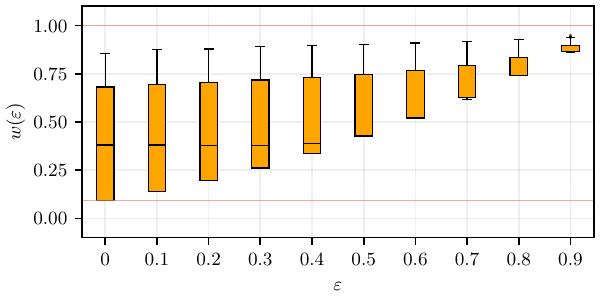}
 \caption{Jain et al. index of the load shed values for varying values of $\varepsilon$. }
 \label{fig:ji-box}
\end{figure}

It was commented in the previous sections that the fairness level, as quantified using the Jain et al. index, is non-monotonic as a function of $p$ for the utility function corresponding to $p$-norm fairness. Fig. \ref{fig:ji-box-p-norm} shows the box plot of the Jain et al. index of the load shed values for varying values of $p$. Similar to Fig. \ref{fig:ji-box}, the box plot is only shown for the 9756 feasible damage scenarios. Specifically, monotonicity fails to hold in 7087 out of the 9756 damage scenarios. 

\begin{figure}
    \centering
    \includegraphics[scale=0.8]{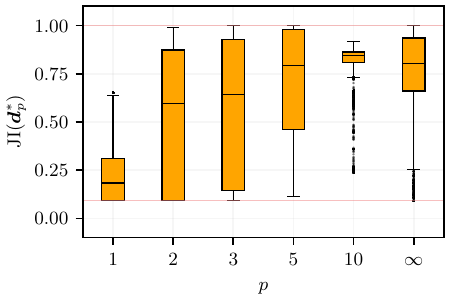}
    \caption{Jain et al. index of the load shed values for varying values of $p$ in Eq. \eqref{eq:p-norm-mls}. Observe that the monotonicity of the index values with respect to $p$ does not hold.}
    \label{fig:ji-box-p-norm}
\end{figure}

\subsection{Efficiency-fairness trade-off}
We now present a systematic approach to quantify the trade-off between efficiency and fairness on the MLS problem enabled by $\varepsilon$-fairness. For completeness, we present the results for the MLS problem where the fairness is enforced using $p$-norm fairness, as in Eq. \eqref{eq:p-norm-mls}. To this end, we first state some obvious facts. For the MLS problem, the maximum achievable efficiency (the minimum amount of load shed) is given by either the optimal objective to \eqref{eq:opt-mls} or the $z(0)$ in \eqref{eq:fair-mls} or $z_0^*$ in \eqref{eq:p-norm-mls}, i.e., when no fairness constraints are imposed. 

When $\varepsilon > 0$ in \eqref{eq:fair-mls}, fairness constraints are enforced and result in a \emph{monotonic} decrease in efficiency (see Proposition \ref{prop:monotonicity}) or, correspondingly, an increase in the minimum load shed from $z(0)$ to $z(\varepsilon)$. We define this reduction in efficiency as a function of $\varepsilon$, denoted by $\eta_r (\varepsilon)$, to be the relative percent increase in load shed when enforcing the distribution of load sheds in $\bm d$ to be at least $\varepsilon$-fair. 
\begin{gather}
 \eta_r(\varepsilon) \triangleq \frac{z(\varepsilon) - z(0)}{z(\varepsilon)} \cdot 100 \% ~\text{ for any $\varepsilon > 0$} \label{eq:n-loss}
\end{gather}

Fig. \ref{fig:n-loss} presents the box plot of $\eta_r(\varepsilon)$ for varying values of $\varepsilon$. It is clear from the plot that the efficiency decreases monotonically with $\varepsilon$, as indicated by Proposition \ref{prop:monotonicity}. In particular, for the MLS problem, the maximum decrease in system efficiency is around $4$\% when $\varepsilon$-fairness is enforced with $\varepsilon = 0.9$. Fig. \ref{fig:n-loss} would be very useful for the transmission system operator to decide what level of fairness is acceptable based on operational efficiency considerations. 

\begin{figure}
 \centering
 \includegraphics[scale=0.8]{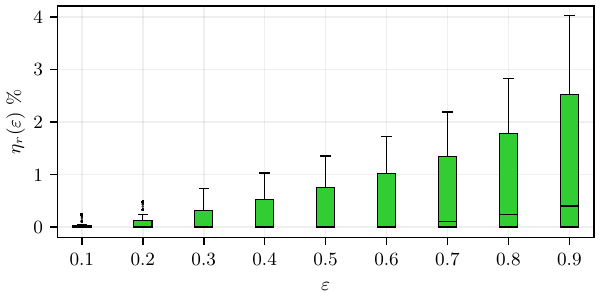}
 \caption{Relative loss in efficiency (\%) as a function of $\varepsilon$.}
 \label{fig:n-loss}
\end{figure}

Now, when $p > 1$ in \eqref{eq:p-norm-mls}, there is no monotonic behavior of the efficiency as a function of $p$ unlike $\varepsilon$ and \eqref{eq:fair-mls}. To show this graphically, we compute 
\begin{gather}
    \eta_r^p \triangleq \frac{z_p^* - z_0^*}{z_p^*} \cdot 100 \% ~\text{ for any $p > 1$} \label{eq:n-loss-p-norm}
\end{gather}
and Fig. \ref{fig:n-loss-p-norm} presents the box plot of $\eta_r^p$ for varying values of $p$. It is evident from the figure that there is no clear relationship between the decrease in efficiency and the value of $p$. A similar trend was observed when the utility function corresponding to the $\alpha$-fairness model, i.e., $U_{\alpha}(\cdot)$ in Eq. \eqref{eq:p-norm-mls} for increasing $\alpha$ values. The absence of a monotonicity property in the utility function-based approaches makes the design of an algorithm to compute the parameter to achieve a desired fairness-efficiency trade-off difficult. 

\begin{figure}
 \centering
 \includegraphics[scale=0.8]{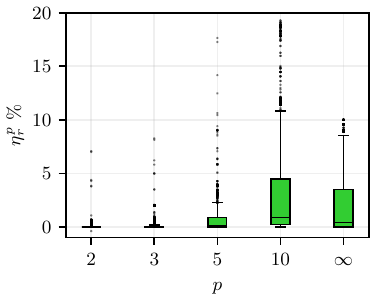}
 \caption{Relative loss in efficiency (\%) as a function of $p$.}
 \label{fig:n-loss-p-norm}
\end{figure}

\section{Conclusion and Future Work} \label{sec:conclusion}
This article develops a parametric model of fairness called $\varepsilon$-fairness that can be directly incorporated into existing optimization problems by adding a single SOC constraint in practical applications. Theoretical properties of $\varepsilon$-fairness that conform with our intuitive understanding of fairness are presented. A case study on incorporating fairness in the optimal operation of a damaged power transmission network also shows how a practitioner can use $\varepsilon$-fairness to quantify the inherent trade-off between fairness and efficiency. 

Future work can proceed in any of the following tangents: (i) study the computational impact of incorporating $\varepsilon$-fairness to combinatorial optimization problems like vehicle routing and facility location, and develop algorithms to address the additional complexity if one exists, (ii) extend the models and algorithms to generalize to the notion of weighted fairness where specific agents are given higher priority or weights and fairness has to be enforced in the distribution of weighted utilities, (iii) develop fast and possibly distributed heuristics to enforce $\varepsilon$-fairness into problems for which similar heuristics to compute good quality fairness-agnostic solutions exist in the literature, and (iv) develop approximation algorithms to incorporate fairness into existing NP-hard decision-making problems.

\section*{Declarations}
\paragraph{Ethics Approval \& Consent to Participate} -- 
Not applicable. 

\paragraph{Consent for Publication} -- Not applicable.

\paragraph{Availability of Data \& Material} -- Not applicable. 

\paragraph{Competing Interests} --
The authors declare that they have no competing interests.

\paragraph{Funding \& Acknowledgements} -- 
The authors acknowledge funding provided by DOE's Grid Modernization Initiative project, ``Power Planning for Alignment of Climate and Energy Systems.'' The research conducted at Los Alamos National Laboratory is done under the auspices of the National Nuclear Security Administration of the US Department of Energy under Contract No. 89233218CNA000001. 

\paragraph{Authors' Contribution} -- Conceptualization, methodology, and manuscript preparation: K.S., D.D., and R.B.

\printbibliography

\end{document}